\documentclass[final]{siamart0516}
\usepackage{subfig}
\usepackage{forest}
\usepackage[section]{placeins}
\usepackage[export]{adjustbox}
\usepackage{changes}

\newcommand{\lp}{\left(}
\newcommand{\rp}{\right)}

\newcommand\supp{\mathop{\rm supp}}

\newcommand{\WRP}{\par\qquad\(\hookrightarrow\)\enspace}
\newcommand{\ARP}{\par\qquad\ \enspace}

\newcommand{\nmax}{n_{\text{max}}}
\newcommand{\child}[1]{c\textsubscript{#1}}
\newcommand{\weight}[1]{w\textsubscript{#1}}


\usepackage{lipsum}
\usepackage{amsfonts}
\usepackage{graphicx}
\usepackage{epstopdf}
\usepackage{algorithmic}
\ifpdf
  \DeclareGraphicsExtensions{.eps,.pdf,.png,.jpg}
\else
  \DeclareGraphicsExtensions{.eps}
\fi

\newcommand{\TheTitle}{An adaptive partition of unity method for Chebyshev polynomial interpolation} 
\newcommand{\TheAuthors}{Kevin W. Aiton, Tobin A. Driscoll}

\headers{Adaptive partition of unity}{\TheAuthors}

\title{{\TheTitle}\thanks{Submitted to the editors \today.
\funding{This research was supported by National Science Foundation grant DMS-1412085.}}}

\author{
  Kevin W. Aiton, Tobin A. Driscoll
}

\usepackage{amsopn}


\ifpdf
\hypersetup{
  pdftitle={\TheTitle},
  pdfauthor={\TheAuthors}
}
\fi


\externaldocument{ex_supplement}


\begin{document}

\maketitle

\begin{abstract}
For a function that is analytic on and around an interval, Chebyshev polynomial interpolation provides spectral convergence. However, if the function has a singularity close to the interval, the rate of convergence is near one. In these cases splitting the interval and using piecewise interpolation can accelerate convergence. Chebfun includes a splitting mode that finds an optimal splitting through recursive bisection, but the result has no global smoothness unless conditions are imposed explicitly at the breakpoints. An alternative is to split the domain into overlapping intervals and use an infinitely smooth partition of unity to blend the local Chebyshev interpolants. A simple divide-and-conquer algorithm similar to Chebfun's splitting mode can be used to find an overlapping splitting adapted to features of the function. The algorithm implicitly constructs the partition of unity over the subdomains. This technique is applied to explicitly given functions as well as to the solutions of singularly perturbed boundary value problems.

\end{abstract}

\begin{keywords}
  partition of unity, Chebyshev interpolation, Chebfun, overlapping domain decomposition
\end{keywords}

\begin{AMS}
  	65L11, 65D05, 65D25
\end{AMS}

\section{Introduction}
Chebyshev polynomial interpolants provide powerful approximation properties, both in theory and as implemented in practice by the Chebfun software system~\cite{battles2004extension}. Chebfun uses spectral collocation to provide very accurate automatic solutions to differential equations~\cite{driscoll2008chebop}. The method is not fully adaptive, though, since the refinement is limited to the degree of the global interpolant. 

 Chebfun includes a \textit{splitting} method that creates piecewise polynomial approximations \cite{pachon2010piecewise}. When splitting is enabled, if a Chebyshev interpolant is unable to represent the function accurately at a specified maximum degree on an interval, the interval is bisected; this process is recursively repeated on the subintervals. Afterwards adjacent subintervals are merged if the new interval allows for a Chebyshev approximation with lower degree. In effect, the method does a binary search for a good splitting location. In \cite{driscoll2014optimal} it was shown that the splitting locations are roughly optimal based on the singularity structure of the function in the complex plane. 

A drawback of Chebfun's splitting approach is that the resulting representation does not ensure anything more than $C^0$ continuity. Differentiation of the Chebyshev interpolation polynomial of degree $n$ has norm $O(n^2)$, so a jump in the derivative develops across a splitting point and becomes more pronounced for higher derivatives and larger $n$. In order to solve a boundary-value problem, Chebfun imposes explicit continuity conditions on the solution to augment the discrete problem. This solution works well in 1D but becomes cumbersome in higher dimensions, particularly if refinements are made nonconformingly.

In this paper we explore the use of Chebyshev interpolants on overlapping domains combined using a \emph{partition of unity}. The resulting approximation has the same accuracy as the individual piecewise interpolants. We use compactly supported weight functions that are infinitely differentiable, so the resulting combined interpolant is also infinitely smooth (though not analytic). We also show that the accuracy of the derivative can be bounded by $\Theta(\delta^{-2})$ for an overlap amount $\delta$, revealing an explicit tradeoff between efficiency (smaller overlap and more like Chebfun splitting) and global accuracy of the derivative. Because the global approximation is smooth, there are no matching conditions needed to solve a BVP, and there are standard preconditioners available that should aid with iterative methods for large discretizations. For example, since we split the interval into overlapping domains we could use the restricted additive Schwarz preconditioner \cite{doi:10.1137/S106482759732678X}.

We describe a recursive, adaptive algorithm for creating and applying a partition of unity, modeled on the recursive splitting in Chebfun but merging adjacent subdomains aggressively in order to keep the total node count low. Even though each node of the recursion only combines two adjacent subdomains, we show that the global approximant is also a partition of unity. We demonstrate that the adaptive refinement is able to resolve highly localized features of an explicitly given function and of a solution to a singularly perturbed BVP. 
 
The use of a partition of unity in our approximation affords us some flexibility; we are able to create approximations which are both efficient and infinitely smooth without matching. Partition of unity schemes have been widely used for interpolation \cite{franke1980smooth,mclain1976two,shepard1968two} and solving PDE's \cite{griebel2000particle,safdari2015radial}. In section~\ref{PUM_FORM_SEC} we introduce the partition of unity method, and we discuss the convergence of the method for a simple split on the interval $[-1,1]$ in section~\ref{converge_sec}. We describe our adaptive algorithm in section~\ref{PUM_recurse}. In section~\ref{PUM_BVP_SEC} we explain how to apply our method to solve boundary value problems on an interval and perform some experiments with singularly perturbed problems.
 
\section{Chebyshev interpolation}
\label{sec_cheb}
We use Chebyshev interpolants for our partition of unity method because they enjoy spectral convergence. Suppose that $f(x)$ is analytic inside a  Bernstein ellipse $E_\rho$ (an ellipse with foci $\pm 1$ and semi-major axis $\rho>1$). We then have Theorem 6 from \cite{trefethen2000spectral}:
\begin{theorem} Suppose $f(z)$ is analytic on and inside the Bernstein ellipse $E_\rho$. Let $p_n$ be the polynomial that interpolates $f(z)$ at $n+1$ Chebyshev points of the second kind. Then there exists a constant $C>0$ such that for all $n>0$,
$$ \left \| f(x)-p_n(x) \right \|_{\infty} \leq C  \rho^{-n}.$$
 \end{theorem}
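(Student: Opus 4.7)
My plan is to use the Hermite contour-integral representation of the interpolation error, estimate the node polynomial on the Bernstein ellipse via the Joukowski map, and then pass to a limit or shrink the contour to $E_\rho$.

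First I would recall that for any polynomial interpolant $p_n$ at distinct nodes $x_0,\dots,x_n\in[-1,1]$ and any contour $\Gamma$ enclosing $[-1,1]$ on which $f$ is analytic,
\[
f(x)-p_n(x)\;=\;\frac{1}{2\pi i}\oint_{\Gamma}\frac{q(x)}{q(z)}\,\frac{f(z)}{z-x}\,dz,\qquad q(\zeta)=\prod_{k=0}^{n}(\zeta-x_k),
\]
for $x\in[-1,1]$. Take $\Gamma=E_{\rho'}$ with $1<\rho'<\rho$; analyticity on and inside $E_\rho$ guarantees $f$ is uniformly bounded on $E_{\rho'}$, say by $M_{\rho'}$. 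Then I take $\rho'\uparrow\rho$ at the end to get the desired rate.

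The critical step is to bound $|q(x)/q(z)|$ when the nodes are Chebyshev points of the second kind. For these nodes, $q(\zeta)$ is (up to a constant) $(1-\zeta^2)U_{n-1}(\zeta)$, which after the Joukowski substitution $\zeta=\tfrac12(w+w^{-1})$ equals, up to a bounded factor, $w^{-(n+1)}(1-w^{2(n+1)})$ or a similar expression linear in $w^n$ and $w^{-n}$. The key inequalities to extract are
\[
\max_{x\in[-1,1]}|q(x)|\;\leq\;C_1,\qquad \min_{z\in E_{\rho'}}|q(z)|\;\geq\;C_2\,(\rho')^{n},
\]
where the lower bound follows because for $|w|=\rho'>1$ the term $w^n$ dominates $w^{-n}$. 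Combining these with $|z-x|\geq \tfrac12(\rho'-\rho'^{-1})$ for $x\in[-1,1]$, $z\in E_{\rho'}$, and with the finite arc length of $E_{\rho'}$, the Hermite integral yields
\[
\|f-p_n\|_\infty\;\leq\;C'\,M_{\rho'}\,(\rho')^{-n},
\]
and letting $\rho'\to\rho$ (absorbing the dependence on $\rho'$ into a single constant $C$) gives the stated bound.

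The main obstacle I anticipate is the lower bound on $|q(z)|$ on $E_{\rho'}$: the estimate has to track the correct exponential rate $\rho^{-n}$ rather than a suboptimal one, and this requires carefully identifying the node polynomial of the Chebyshev-second-kind grid with the Chebyshev polynomials and then using the Joukowski parametrization to isolate the dominant $w^n$ behavior. Everything else, bounding $|q(x)|$ on $[-1,1]$, estimating arc length, and bounding $f$ using analyticity on $E_\rho$, is routine once that key estimate is in hand.
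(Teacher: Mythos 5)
The paper does not actually prove this theorem; it is quoted as Theorem 6 of Trefethen's \emph{Spectral Methods in MATLAB}, so there is no in-paper argument to compare yours against. Your proposal is the standard textbook proof and its skeleton is sound: the Hermite integral representation, the identification $q(\zeta)=c\,(1-\zeta^2)U_{n-1}(\zeta)$ for the second-kind grid, and the Joukowski substitution giving $q\propto (w-w^{-1})(w^{n}-w^{-n})$ are all correct, and the ratio $|q(x)/q(z)|$ does decay like $(\rho')^{-n}$ uniformly for $x\in[-1,1]$ and $z\in E_{\rho'}$. Three small points to tighten. First, for the lower bound $\min_{z\in E_{\rho'}}|q(z)|\geq C_2(\rho')^{n}$ it is cleaner to keep the factored form and use $|w-w^{-1}|\geq \rho'-(\rho')^{-1}$ together with $|w^{n}-w^{-n}|\geq (\rho')^{n}\bigl(1-(\rho')^{-2n}\bigr)$; a termwise triangle inequality applied to the expanded polynomial can fail to give a positive bound for small $n$ when $\rho'$ is close to $1$, whereas the factored estimate is uniform in $n\geq 1$ as the theorem requires. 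Second, your stated separation $|z-x|\geq\tfrac12\bigl(\rho'-(\rho')^{-1}\bigr)$ is not correct: the minimum distance from $E_{\rho'}$ to $[-1,1]$ is attained at the ends of the major axis and equals $\tfrac12\bigl(\rho'+(\rho')^{-1}\bigr)-1$, which is smaller; this is harmless since any positive lower bound depending only on $\rho'$ suffices. Third, since $f$ is assumed analytic \emph{on and inside} $E_\rho$, hence in a neighborhood of the closed ellipse, you may integrate over $E_\rho$ itself and dispense with the limit $\rho'\uparrow\rho$ entirely.
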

If $f(x)$ is Lipschitz continuous on $[-1,1]$ then
\begin{equation}
f(x) = \sum_{k=0}^\infty a_k T_k(x), \quad a_k = \frac{2}{\pi} \int_{-1}^1 \frac{f(x) T_k(x)}{\sqrt{1-x^2}} dx,
\end{equation}
where $T_k$ denotes the degree $k$ Chebyshev polynomial (and for $a_0$, we multiply by $\frac{1}{\pi}$ instead of $\frac{2}{\pi}$). Furthermore if $p_n(x)$ is the $n$th degree Chebyshev interpolant then
\begin{equation}
f(x)-p_n(x) = \sum_{k=n+1}^{\infty} a_k \lp T_k(x)-T_m(x)\rp,
\end{equation}
where
\begin{equation}
m = \left [ (k+n-1)(\text{mod }2n) - (n-1)\right ],
\end{equation}
implying we can determine the accuracy of the interpolant $p_n(x)$ by inspecting the Chebyshev coefficients \cite{Trefethen2013}. Chebfun's standardChop method determines the minimum required degree by searching for a plateau of low magnitude coefficients \cite{Aurentz:2017:CCS:3034774.2998442}.  For example, Figure~\ref{Coeff_example} shows the first 128 coefficients of $f(x)=\exp \lp \sin \lp \pi x \rp \rp$. We see that all coefficients after the first 46 have magnitude less than $10^{-15}$. In this case, Chebfun determines the ideal degree to be 50.

\begin{figure}[!htb]
\centering
\includegraphics[scale = 0.5]{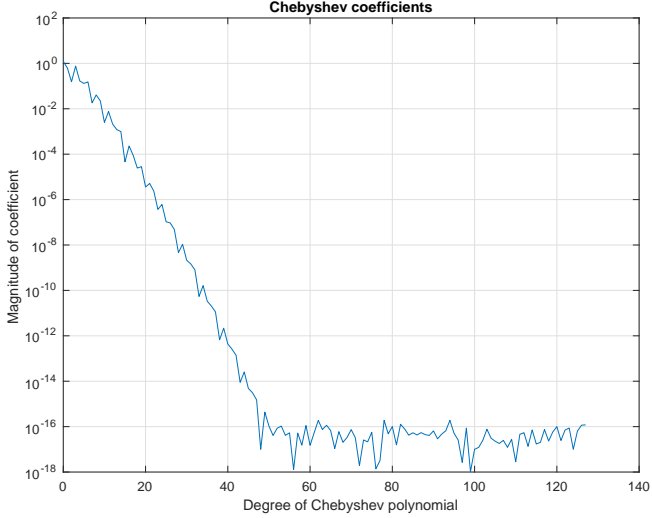}
\caption{Chebyshev coefficients for $f(x)=\exp \lp \sin \lp \pi x \rp \rp$.}
\label{Coeff_example}
\end{figure}

\section{Partition of unity formalism}
\label{PUM_FORM_SEC}
 Suppose we have an overlapping covering $\{ \Omega_k \}_{k=1}^N$ on a bounded region $\Omega$. A partition of unity is a collection of real valued functions $\{w_k(x)\}_{k=1}^N$ such that:
\begin{itemize}
\item $w_k(x)$ has support within $\Omega_k$,
\item each $w_k(x)$ is nonnegative,
\item $\forall x \in \Omega, \quad \sum_{k=1}^N w_k(x)=1$.
\end{itemize}
The functions $\{w_k(x)\}_{k=1}^N$ are called the \textit{weights} of the partition. Suppose now that $\Omega=[-1,1]$ and each $\Omega_k$ is an interval. We can use the partition of unity $\{w_k(x)\}_{k=1}^N$ to construct an approximating function. Suppose that for $m \geq 0$ we have a function $f \in C^{m}([-1,1])$, each weight $w_k(x)\in C^{m}([-1,1])$ and for each patch $\Omega_k$ we have an approximation $s_k(x)$ of $f(x)$. Then the function
\begin{equation}
\label{POUAPPROX}
s(x) = \sum_{k=1}^N w_k(x)s_k(x)
\end{equation}
can be used to approximate $f(x)$ and its derivatives \cite{wendland2004scattered}.

\begin{theorem}
\label{PUMCON}
Suppose $f \in C^{m}([-1,1])$ and for each patch $\Omega_k$ we have a function $s_k(x)$ such that
$$ \|f^{(\alpha)}(x)-s_k^{(\alpha)}(x)\|_{L_{\infty}(\Omega_k)} \leq \varepsilon_k(\alpha) $$
for $\alpha \leq m$. Thus for $j\leq m$, if $s(x)$ is the approximation (\ref{POUAPPROX}) then
\begin{equation}
\left \|f^{(j)}(x)- s^{(j)}(x) \right \|_{L_{\infty}(\Omega_k)} \leq \sum_{k=1}^N\sum_{i=0}^j \binom{j}{i} \left \| w_k^{(j-i)}(x) \right \|_{L_{\infty}(\Omega_k)} \epsilon_k(i).
\end{equation}
\end{theorem}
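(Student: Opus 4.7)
The plan is to exploit the partition-of-unity identity to rewrite the error in a form to which the Leibniz rule and triangle inequality apply directly.

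First I would observe that since $\sum_{k=1}^N w_k(x)=1$ on $\Omega$, we can write
\[
f(x)-s(x) \;=\; \sum_{k=1}^N w_k(x)\bigl(f(x)-s_k(x)\bigr),
\]
which reduces the global error to a sum of patch-local errors modulated by the weights. The summands are well defined: each $w_k$ vanishes outside $\Omega_k$, so terms involving points where $s_k$ is undefined contribute nothing.

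Next I would differentiate this identity $j$ times. Because $w_k \in C^m$ and $f,s_k \in C^m$ on $\Omega_k$, the general Leibniz rule gives, for each $k$,
\[
\bigl(w_k(x)(f(x)-s_k(x))\bigr)^{(j)} \;=\; \sum_{i=0}^{j}\binom{j}{i}\, w_k^{(j-i)}(x)\,\bigl(f(x)-s_k(x)\bigr)^{(i)}.
\]
Summing over $k$ yields an exact expression for $f^{(j)}(x)-s^{(j)}(x)$. Then the triangle inequality bounds $|f^{(j)}(x)-s^{(j)}(x)|$ by
\[
\sum_{k=1}^N \sum_{i=0}^{j}\binom{j}{i}\,|w_k^{(j-i)}(x)|\,|(f-s_k)^{(i)}(x)|,
\]
and taking suprema patchwise, together with the hypothesis $\|(f-s_k)^{(i)}\|_{L_\infty(\Omega_k)}\leq \varepsilon_k(i)$, produces the stated bound.

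There is no genuine obstacle here; the argument is essentially an application of Leibniz plus the triangle inequality. The only mild subtlety worth flagging in a careful write-up is ensuring that the products $w_k^{(j-i)}(f-s_k)^{(i)}$ are interpreted as zero outside $\Omega_k$, so that the sup norms on the right-hand side are taken over $\Omega_k$ (where $s_k$ is defined) without boundary issues. This is justified by the assumption that $\supp(w_k) \subset \Omega_k$ together with $w_k \in C^m$, which forces all derivatives $w_k^{(j-i)}$ to vanish on $\partial\Omega_k\cap\Omega$.
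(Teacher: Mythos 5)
Your argument is essentially identical to the paper's: both rewrite the error via the partition-of-unity identity as $\sum_k w_k(f-s_k)$, apply the Leibniz rule, and finish with the triangle inequality. The remark about the products vanishing outside $\Omega_k$ is a reasonable extra care the paper leaves implicit, but it does not change the route.
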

\begin{proof}
Since $\sum_{k=1} w_k(x)=1$, $\sum_{k=1} w_k(x)f(x)=f(x)$. Thus
\begin{equation}
\begin{aligned}
\frac{d^{j}}{d x^j}f(x)-\frac{d^{j}}{d x^j} \sum_{k=1}^N w_k(x)s_k(x) &= \frac{d^{j}}{d x^j} \sum_{k=1}^N w_k(x)(f(x)-s_k(x)) \\
&= \sum_{k=1}^N\sum_{i=0}^j \binom{j}{i} w_k^{(j-i)}(x) \lp f^{(i)}(x)-s_k^{(i)}(x) \rp.
\end{aligned}
\end{equation}
The result follows from here by the triangle inequality.
\end{proof}

\section{Convergence analysis}
\label{converge_sec}
In this section we consider a single interval partitioned into two overlapping parts, i.e.
$[-1,t]$,$[-t,1]$, where $t$ is the overlap parameter such that $0<t<1$. For the weights, we use Shepard's method \cite{shepard1968two} based on the compactly supported, infinitely differentiable shape function
\begin{align}
\psi(x) = \begin{cases}
\exp \lp  1 - \frac{1}{1-x^2}\rp & |x| < 1, \\
0 & |x| \geq 1.
\end{cases}
\end{align}
We define support functions
\begin{align}
\psi_{\ell}(x) = \psi \lp \frac{x+1}{1+t} \rp \quad \text{ and } \quad \psi_{r}(x) = \psi \lp \frac{x-1}{1+t} \rp ,
\end{align}
to construct the PU weight functions
\begin{align}
w_{\ell}(x) = \frac{\psi_{\ell}(x)}{\psi_{\ell}(x)+\psi_{r}(x)} \quad \text{and} \quad w_{r}(x) = \frac{\psi_{r}(x)}{\psi_{\ell}(x)+\psi_{r}(x)},
\label{PUW}
\end{align}
where $w_{\ell}(x),w_{r}(x)$ have support on the left and right intervals, respectively.

Suppose that $s_{\ell}(x),s_{r}(x)$ approximate $f(x)$ on $[-1,t]$, $[-t,1]$ respectively and are both infinitely smooth. Let
\begin{equation}
s(x) = w_{\ell}(x)s_{\ell}(x)+w_{r}(x)s_{r}(x),
\label{PUM2}
\end{equation}
where $s(x)$ is the partition of unity approximation. Following Theorem~\ref{PUMCON} we have for $x \in [-1,1]$ that
\begin{equation}
\begin{aligned}
\left | f(x)-s(x) \right | &= \left | w_{\ell}(x) \lp f(x) - s_{\ell}(x) \rp + w_{r}(x) \lp f(x) - s_{r}(x) \rp \right | \\
&\leq w_{\ell}(x) \left | f(x) - s_{\ell}(x) \right | + w_{r}(x) \left | f(x) - s_{r}(x) \right |.
\end{aligned}
\end{equation}
We conclude that
\begin{align}
\left \| f(x)-s(x) \right \|_{L_{\infty}[-1,1]} \leq \max \lp \left \| f(x)-s_{\ell}(x) \right \|_{L_{\infty}[-1,t]} , \left \| f(x)-s_{r}(x) \right \|_{L_{\infty}[-t,1]} \rp.
\label{POU_UP}
\end{align}
This implies that the Partition of unity Method (PUM) preserves the accuracy of its local approximants. We also have that $s(x)$ is infinitely smooth. For the first derivative we have
\begin{equation}
\begin{aligned}
\left | f'(x)-s'(x) \right | &\leq \left | w_{\ell}(x) \lp f'(x)-s_{\ell}'(x) \rp \right |+\left | w_{r}(x) \lp f'(x)-s_{r}'(x) \rp \right | \\
&+\left | w_{\ell
}'(x) \lp f(x)-s_{\ell}(x) \rp \right | +\left | w_{r}'(x) \lp f(x)-s_{r}(x) \rp \right |,
\end{aligned}
\end{equation}
giving us
\begin{equation}
\begin{aligned}
\left \| f'(x)-s'(x) \right \|_{L_{\infty}[-1,1]} &\leq \max \lp \left \| f'(x)-s_{\ell}'(x) \right \|_{L_{\infty}[-1,t]} , \left \| f'(x)-s_{r}'(x) \right \|_{L_{\infty}[-t,1]} \rp \\
&+\left \| w_{\ell}'(x) \right \|_{L_{\infty}[-t,t]} \left \| f(x)-s_{\ell}(x)\right \|_{L_{\infty}[-t,t]}\\ 
&+ \left \| w_{r}'(x) \right \|_{L_{\infty}[-t,t]} \left \| f(x)-s_{r}(x)\right \|_{L_{\infty}[-t,t]},
\end{aligned}
\label{diff_error}
\end{equation}
since the derivatives of the weights have support only on the overlap. For $t \ll 1$, the weights steepen to become nearly step functions. This causes the derivatives of the weights to be large in magnitude, resulting in an increase in the error for the derivative.

Since $w_{\ell}'(x)=-w_{r}'(x)$, from (\ref{diff_error}) we can infer
\begin{equation}
\begin{aligned}
\left \| f'(x)-s'(x) \right \|_{L_{\infty}[-1,1]} \leq \max \lp \left \| f'(x)-s_{\ell}'(x) \right \|_{L_{\infty}[-1,t]} , \left \| f'(x)-s_{r}'(x) \right \|_{L_{\infty}[-t,1]} \rp& \\
+\left \| w_{\ell}'(x) \right \|_{L_{\infty}[-t,t]} \max \lp \left \| f(x)-s_{\ell}(x)\right \|_{L_{\infty}[-t,t]}
, \left \| f(x)-s_{r}(x)\right \|_{L_{\infty}[-t,t]} \rp. &
\end{aligned}
\label{diff_error_2}
\end{equation}
We have that $x=0$ is a critical point of $w_{\ell}'(x)$ and for $t<0.4$ it can be shown that the maximum of $\left |w_{\ell}'(x) \right |$ occurs at $x=0$. Since
\begin{equation}
w_{\ell}'(0) = -\frac{(1+t)^2}{t^2 (2+t)^2},
\end{equation}
we can infer that $\left \| w_{\ell}'(x) \right \|_{L_{\infty}[-t,t]} = \Theta(t^{-2})$ as $t\to 0$. The norm of the Chebyshev differentiation operator is $\Theta(n^2)$ (for $n$ nodes), implying that the two terms on the right-hand side of (\ref{diff_error_2}) are balanced if $t^{-2}=\Theta(n^2)$, or equivalently $t = \Theta \lp \frac{1}{n} \rp$. A simple example of a split can be seen in Figure~\ref{ARCTAN}.

\begin{figure}[!htb]
\centering
\includegraphics[scale = 0.5]{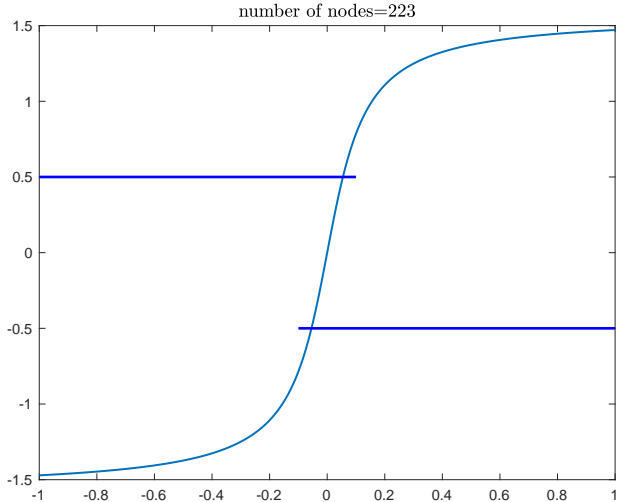}
\caption{Plot of the PU approximation with overlap parameter $t=0.1$ for $f(x)=\arctan \lp x/0.1 \rp$, where the thick lines represent the domains of the left and right approximation. Here $\left \| f(x)-s(x) \right \|_{L_{\infty}[-1,1]}= 2.4 \mathrm{e}{-15}$ and $\left \| f'(x)-s'(x) \right \|_{L_{\infty}[-1,1]}= 1.7 \mathrm{e}{-13}$.}
\label{ARCTAN}
\end{figure}

\section{Recursive algorithm}
\label{PUM_recurse}
In order to allow for adaptation to specific features of $f(x)$, we next describe a recursive bisection algorithm that works similarly to Chebfun's splitting algorithm \cite{driscoll2014optimal} and to that of \cite{tobor2006reconstructing}. Suppose we want to construct a PU approximation $s_{[a,b]}(x)$ on the interval $[a,b]$ using Chebyshev interpolants on the patches. If $f(x)$ can be resolved by a Chebyshev interpolant $s(x)$ of length $\nmax$ on $[a,b]$ then
\begin{align}
s_{[a,b]}(x) = s(x).
\end{align}
Otherwise we split the interval into two overlapping domains and blend the results as in (\ref{PUM2}):
\begin{align}
s_{[a,b]}(x) = w_{\ell}(x)
s_{\left [a,a+\delta \right ]}(x)+ w_{r}(x) s_{\left [b-\delta,b\right ]}(x),
\end{align}
where $w_{\ell},w_{r}$ are the PU weight functions defined in (\ref{PUW}) (but defined for $[a,b]$, and $\delta= (1+t) \lp \frac{a+b}{2} \rp$).

We define a binary tree $T$ with each node $\nu$ having the following properties:
\begin{itemize}
\item interval($\nu$):=the domain of the patch
\item \child{0}($\nu$),\child{1}($\nu$):=respective left and right subtrees of $\nu$ (if split)
\item \weight{0}($\nu$),\weight{1}($\nu$):=respective left and right weights of $\nu$ (if split)
\item interpolant($\nu$):=Chebyshev interpolant on interval($\nu$) if $\nu$ is a leaf
\item values($\nu$):=values of the function we are approximating at the Chebyshev points of $\nu$.
\end{itemize}
We define root($T$) as the root node of $T$. In Algorithm~\ref{alg2} we formally describe how we refine our splitting; the merge method is described in section~\ref{Merging_sec}. 

\begin{algorithm}[!h]
\caption{splitleaves($\nu$,$\nmax$,$t$)}
\label{alg2}
\begin{algorithmic}
\IF{$\nu$ is a leaf and $f(x)$ cannot be resolved by interpolant($\nu$)}
\STATE Define new nodes $\nu_0$, $\nu_1$.
\STATE $[a,b]$:=interval($\nu$)
\STATE $\delta:= \frac{b-a}{2} \lp 1+t \rp$
\STATE interval($\nu_0$):= $[a,a+\delta]$
\STATE interval($\nu_1$):= $[b-\delta,b]$
\FOR{$k=0,1$}
\STATE \child{k}($\nu$) := $\nu_k$
\ENDFOR
\STATE \weight{0}($\nu$),\weight{1}($\nu$):= weights in (\ref{PUW}) defined for $[a,a+\delta]$,$[b-\delta,b]$
\ELSIF{$\nu$ is a leaf and $f(x)$ can be resolved by a Chebyshev interpolant with degree less than $\nmax$}
\STATE interpolant($\nu$):=minimum degree interpolant $f(x)$ can be resolved by \ARP 
as determined by Chebfun
\ELSE
\FOR{$k=0,1$}
\STATE splitleaves(\child{k}($\nu$),$\nmax$,$t$)
\ENDFOR
\STATE merge($\nu$,$\nmax$)
\ENDIF
\end{algorithmic}
\end{algorithm}

We first initialize the tree $T$ with a single node $\nu$ where interval($\nu$)=$[a,b]$. Next we repeatedly call the splitleaves method until each leaf of $T$ has a Chebyshev interpolant that can resolve $f(x)$ with degree less than $\nmax$, as seen in Algorithm~\ref{alg6}. For each leaf $\nu$ of $T$, sample($T$,$f(x)$) sets values($\nu$) using $f(x)$ . For a leaf $\nu$, we determine if a Chebyshev interpolant can resolve $f(x)$ using Chebfun's standardChop method with values($\nu$) (as described in Section~\ref{sec_cheb}). Using $T$ we can evaluate $s_{[a,b]}(x)$ recursively as demonstrated in Algorithm~\ref{alg3}. 

\begin{algorithm}[!h]
\caption{$T$=refine($\nmax$,$t$,$f(x)$)}
\label{alg6}
\begin{algorithmic}
\STATE Define $T$ as a tree with a single node.
\WHILE{$T$ has unresolved leaves}
\STATE sample($T$,$f(x)$)
\STATE splitleaves(root($T$),$\nmax$,$t$)
\ENDWHILE
\end{algorithmic}
\end{algorithm}

\begin{algorithm}[!h]
\caption{v=eval($\nu$,$x$)}
\label{alg3}
\begin{algorithmic}
\IF{$\nu$ is a leaf}
\STATE $p$:=interpolant($\nu$)
\STATE v:= $p(x)$
\ELSE
\STATE $v_0,v_1$:=0

\STATE $w_0$:=\weight{0}($\nu$)
\STATE $w_1$:=\weight{1}($\nu$)
\FOR{$k=0,1$}
\IF{$x \in$ interval(\child{k}($\nu$))}
\STATE $v_k$:=eval(\child{k}($\nu$),$x$)
\ENDIF
\ENDFOR
\STATE v := $w_0(x)v_0 + w_1(x)v_1$
\ENDIF
\end{algorithmic}
\end{algorithm}

As a simple example, we approximate the function $f(x)=\arctan \lp \frac{x-0.25}{0.001} \rp$ with $n_{\max}=128$. In order to resolve to machine precision, a global Chebyshev interpolant on the interval $[-1,1]$ requires 25743 nodes while our method requires 523. Chebfun with non-overlapping splitting requires 381 nodes. Overlapping splittings will typically require more total nodes while offering the benefit of global smoothness. The result can be seen in Figure~\ref{ARCTAN2}.

\begin{figure}[!htb]
\centering
\includegraphics[scale = 0.5]{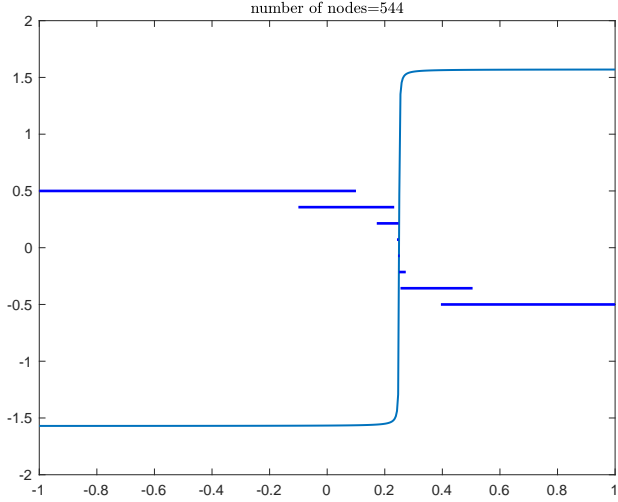}
\caption{Plot of the partition of unity approximation with overlap parameter $t=0.1$ for $f(x)=\arctan \lp (x-0.25)/0.001 \rp$, where the solid blue lines represent the patches.}
\label{ARCTAN2}
\end{figure}

We can deduce from (\ref{POU_UP}) that $s_{[a,b]}(x)$ will approximate $f(x)$. Moreover, our method implicitly creates a PU on the leaves of the tree through the product of the weights at each level.

\begin{theorem}
Let an approximation $s_{[a,b]}(x)$ be as in (\ref{PUM2}). Then the tree that represents $s_{[a,b]}(x)$ implicitly defines a PU $\{w_k(x)\}_{k=1}^M$, where $w_k(x)$ has compact support over the $k$th leaf.
\end{theorem}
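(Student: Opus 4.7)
The plan is to induct on the number of leaves of the tree (equivalently, on the depth of the recursion). In the base case the tree is a single leaf, so $s_{[a,b]}(x)$ equals a single Chebyshev interpolant, and the implicit partition is simply $\{w_1(x)\equiv 1\}$ on $[a,b]$, which trivially satisfies all three PU conditions with support on the lone leaf.

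For the inductive step, suppose $\nu$ is an internal node with children $\nu_0,\nu_1$, and that each subtree $T_k$ rooted at $\nu_k$ already implicitly defines a partition of unity $\{w_{k,j}\}_{j=1}^{M_k}$ on its own interval $I_k=[a,a+\delta]$ or $[b-\delta,b]$, with $w_{k,j}$ supported on the $j$th leaf of $T_k$. Unrolling the recursive eval in Algorithm~\ref{alg3} one level gives
\begin{equation*}
s_{[a,b]}(x) \;=\; w_\ell(x)\sum_{j=1}^{M_0} w_{0,j}(x)\, s_{0,j}(x) \;+\; w_r(x)\sum_{j=1}^{M_1} w_{1,j}(x)\, s_{1,j}(x),
\end{equation*}
where $w_\ell,w_r$ are the node weights from (\ref{PUW}) and $s_{k,j}$ is the Chebyshev interpolant at the $j$th leaf of $T_k$. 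I would then propose as the implicit global weights the products $W_{k,j}(x) := w_k(x)\, w_{k,j}(x)$, with each $w_{k,j}$ extended by zero outside $I_k$.

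Verifying the three PU properties is then the routine part. Nonnegativity is immediate from the nonnegativity of $w_k$ and $w_{k,j}$. The support of $W_{k,j}$ is contained in $\supp(w_k)\cap\supp(w_{k,j})$, which lies in the $j$th leaf of $T_k$; this leaf is also a leaf of the combined tree at $\nu$, giving the required support condition. For the sum to 1, I would split by cases: on $I_k$ we have $\sum_j w_{k,j}(x)=1$ by the inductive hypothesis, while outside $I_k$ the factor $w_k(x)$ vanishes since $\supp(w_k)\subset I_k$ by construction in (\ref{PUW}). Hence in either case $w_k(x)\sum_j w_{k,j}(x) = w_k(x)$ on all of $[a,b]$, so
\begin{equation*}
\sum_{k=0}^{1}\sum_{j=1}^{M_k} W_{k,j}(x) \;=\; \sum_{k=0}^{1} w_k(x) \;=\; 1
\end{equation*}
on $[a,b]$ by the single-level PU property of (\ref{PUW}).

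The only subtle point, and the step I would be most careful about, is the bookkeeping around the zero-extension of $w_{k,j}$ outside $I_k$: the inductive hypothesis only asserts $\sum_j w_{k,j}=1$ on $I_k$, not on all of $[a,b]$, so one must use the fact that $w_k$ annihilates any discrepancy outside $I_k$. Once that is clearly stated, the identity collapses to $w_\ell+w_r\equiv 1$ on $[a,b]$ and the proof is complete.
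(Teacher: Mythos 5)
Your proposal is correct and follows essentially the same inductive argument as the paper: the implicit weights are the products of the node weights $w_\ell, w_r$ with the inductively obtained leaf weights of each subtree, and the sum-to-one property reduces to $w_\ell + w_r \equiv 1$. Your streamlined identity $w_k(x)\sum_j w_{k,j}(x) = w_k(x)$ on all of $[a,b]$ neatly collapses the paper's three-case support analysis into one step, and your explicit treatment of the base case and the zero-extension is a welcome clarification, but the substance is the same.
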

\begin{proof}
Suppose that on the domain $[a,b]$ we have PU's $\{w_{\ell k}(x)\}_{k=1}^{M_\ell}$, $\{w_{rk}(x)\}_{k=1}^{M_r}$ for the leaves of the left and right child respectively. We claim that
\begin{equation}
\{w_{\ell}(x) w_{\ell k}(x)\}_{k=1}^{M_\ell} \cup \{w_r(x) w_{r k}(x)\}_{k=1}^{M_r}
\label{UNPU}
\end{equation}
forms a PU over the leaves of the tree. We first observe that $w_{\ell}(x)w_{\ell k}(x)$ will have support in $\supp \lp w_{1k}(x) \rp$, the domain of the respective leaf. This is similarly true for $w_{r}(x)w_{r k}(x)$.

Next suppose that $x \in \sup \lp w_{\ell}(x)\rp \cap \, \sup \lp w_{r}(x)\rp^C$. Then $w_{\ell}(x)=1$ and $w_r(x)=0$, so
\begin{equation}
\sum_{k=1}^{M_\ell} w_{\ell}(x) w_{\ell k}(x)+ \sum_{k=1}^{M_r} w_{r}(x) w_{r k}(x) = \sum_{k=1}^{M_\ell} w_{\ell k}(x) = 1,
\end{equation}
since $\{w_{\ell k}(x)\}_{k=1}^{M_\ell}$ is a PU. This is similarly true if $x \in \sup \lp w_{\ell}(x)\rp^C \cap \sup \lp w_{r}(x)\rp$. Finally if $x \in \sup \lp w_{\ell}(x)\rp \cap \sup \lp w_{r}(x)\rp$ then
\begin{equation}
\begin{aligned}
\sum_{k=1}^{M_\ell} w_{\ell}(x) w_{\ell k}(x)+ \sum_{k=1}^{M_r} w_{r}(x) w_{r k}(x) &=  w_{\ell}(x) \sum_{k=1}^{M_\ell}  w_{\ell k}(x)+ w_{r}(x) \sum_{k=1}^{M_r} w_{r k}(x) \\
&= w_{\ell}(x)+w_{r}(x)=1.
\end{aligned}
\end{equation}
Thus by induction, we have that the product of weights through the binary tree for (\ref{PUM2}) implicitly creates a PU over the leaves.
\end{proof}

\subsection{Merging}
\label{Merging_sec}
As we create the tree we opportunistically merge leaves for greater efficiency. If a particular location in the interval requires a great deal of refinement, the recursive splitting essentially performs a binary search for that location (as was noted about Chebfun splitting in \cite{driscoll2014optimal}). The intermediate splits are not necessarily aiding with resolving the function; they are there just to keep the binary tree full. In Chebfun the recursive splitting phase is followed by a merging phase that discards counterproductive splits. We describe a similar merging operation here, but we allow these merges to take place whenever a leaf splits while its sibling does not, in order to keep the number of leaves from unnecessarily growing exponentially.

\begin{figure}[!htb]
\centering
\adjustbox{valign=t}{ 
\subfloat[Tree before merging. Here ${a_2<b_1}$ and ${a_{22}<b_{21}}$.]{
     \begin{forest}
for tree={circle,draw, l sep=20pt,,scale=0.98}
[ {$\begin{array}{c}
[a,b] \\
s_{[a,b]}(x)
\end{array}$}
    [ {$\begin{array}{c}
 [a,b_1]  \\
s_{[a,b_{1}]}(x) \\
 w_{\ell_1}(x)
\end{array}$},name=left_p]
    [ {{$\begin{array}{c}
    [a_2,b] \\
s_{[a_{2},b]}(x) \\
 w_{r_1}(x)
\end{array}$}} 
      [{$\begin{array}{c}
 [a_2,b_{21}] \\     
s_{[a_2,b_{21}]}(x) \\
 w_{\ell_2}(x)
\end{array}$},name=right_p] 
      [{$\begin{array}{c}
  [a_{22},b] \\    
s_{[a_{22},b]}(x) \\
 w_{r_2}(x)
\end{array}$}] 
  ] 
]
]
\draw[<->,dotted,thick] (right_p) to[out=north west,in=south] (left_p);
\end{forest}
   \label{TRM_1}
 }}\hfill
\adjustbox{valign=t}{ 
\subfloat[Tree after merging.]{
     \begin{forest}
for tree={circle,draw, l sep=20pt,scale=0.98}
[ {$\begin{array}{c}
[a,b]\\
s_{[a,b]}(x)
\end{array}$}
    [ {$\begin{array}{c}
 [a,b_{21}] \\   
s_{[a,b_{21}]}(x) \\
\hat{w}_{\ell_1}(x)
\end{array}$} ]
    [ {$\begin{array}{c}
[a_{22},b] \\    
s_{[a_{22},b] }(x) \\
\hat{w}_{r_1}(x)
\end{array}$} ] 
]
]
\end{forest}
   \label{TRM_2}
 }}
\caption{An example of how leaves are merged, where each node is labeled with its domain, PU approximation and weight.}
\label{Tree_merge}
\end{figure}
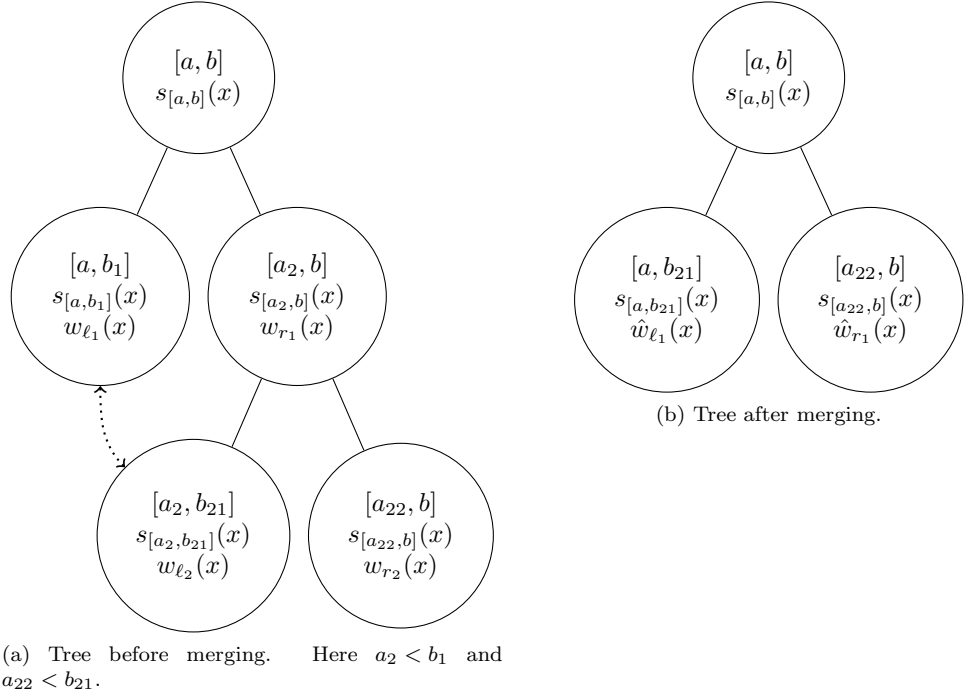

In Figure~\ref{Tree_merge} we illustrate how we merge leaves; the interval $[a,b_{1}]$ is merged with $[a_2,b_{21}]$. Here we decide to merge if $f(x)$ can be resolved with an interpolant with degree less than $\nmax$ on the interval $[a,b_{21}]$. For the new tree we define the left weight $\hat{w}_{\ell_1}(x)$ in Figure~\ref{Tree_merge} as
\begin{align}
\hat{w}_{\ell_1}(x) = \begin{cases} 
                1 & x<a_{22}, \\
                w_{\ell_2}(x) & \text{ otherwise.}        
\end{cases}
\label{pwe}
\end{align}
Since $w_{\ell_2}(x)=1$ for $x<a_{22}$, $\hat{w}_{\ell_1}(x)$ is smooth. For the right weight we use $\hat{w}_{r_1}(x) = w_{r_2}(x)$; these new weights form a PU. The PU approximation
\begin{align}
\hat{s}(x)=w_{\ell 1}(x) s_{[a,b_{1}]}(x)+w_{r 1}(x) s_{[a_2,b_{21}]}(x)
\end{align}
can be used to approximate  $f(x)$ on $[a,b_{21}]$ since $f(x)$ is resolved at the leaves. In this case $s_{[a,b_{21}]}(x)$ is computed from sampling $\hat{s}(x)$. If the degree of $s_{[a,b_{21}]}(x)$ after Chebfun's chopping is less than $\nmax$, we decide to merge. We explain in more detail the merging in Algorithm~\ref{alg5}; here extend($w(x)$,$[a,b]$) piecewise extends the weight $w(x)$ in $[a,b]$ as in (\ref{pwe}).  We show the results for merging in Figure~\ref{MERGE_EXAMPLE} with $f(x)=\frac{1}{x-1.001}$.

\begin{algorithm}[!h]
\caption{merge($\nu$,$\nmax$)}
\label{alg5}
\begin{algorithmic}
\IF{\child{0}($\nu$) and \child{0}(\child{1}($\nu$)) (child and grandchild of $\nu$) are leaves and both of the intervals of the leaves can be resolved on}
\STATE Define a new leaf $\nu_0$
\STATE $p_0(x)$:=interpolant(\child{0}($\nu$))
\STATE $p_1(x)$:=interpolant(\child{1}(\child{0}($\nu$)))
\STATE $w_0(x)$:= \weight{0}($\nu$)
\STATE $w_1(x)$:= \weight{1}($\nu$)
\STATE $\hat{s}(x)$:=$w_0(x)p_0(x)+w_1(x)p_1(x)$
\IF{$\hat{s}(x)$ can be resolved by a Chebyshev interpolant $p(x)$ with degree less than $\nmax$}
\STATE interval($\nu_0$):=interval(\child{0}($\nu$))$\cup$interval(\child{0}(\child{1}($\nu$)))
\STATE interpolant($\nu_0$):=$p(x)$
\STATE points($\nu_0$):=Chebyshev grid of length deg($p(x)$) on $[a_0,b_1]$
\STATE $\hat{w}_0(x)$:=  \weight{0}(\child{1}($\nu$))
\STATE $\hat{w}_1(x)$:=  \weight{1}(\child{1}($\nu$))
\STATE \weight{0}($\nu$):= extend($\hat{w}_0(x)$,interval($\nu_0$))
\STATE \weight{1}($\nu$):= $\hat{w}_1(x)$
\STATE \child{0}($\nu$):= $\nu_0$
\STATE \child{1}($\nu$):= \child{1}(\child{1}($\nu$))
\ENDIF
\ELSIF{\child{1}($\nu$) is a leaf and \child{1}(\child{0}($\nu$)) is a leaf (and exists)}
\STATE inv(merge($\nu$)) (i.e. apply the algorithm, except swap 0 and 1)
\ENDIF
\end{algorithmic}
\end{algorithm}

\begin{figure}[!htb]
\centering
\subfloat[Tree before merging.]{
\includegraphics[scale = 0.35]{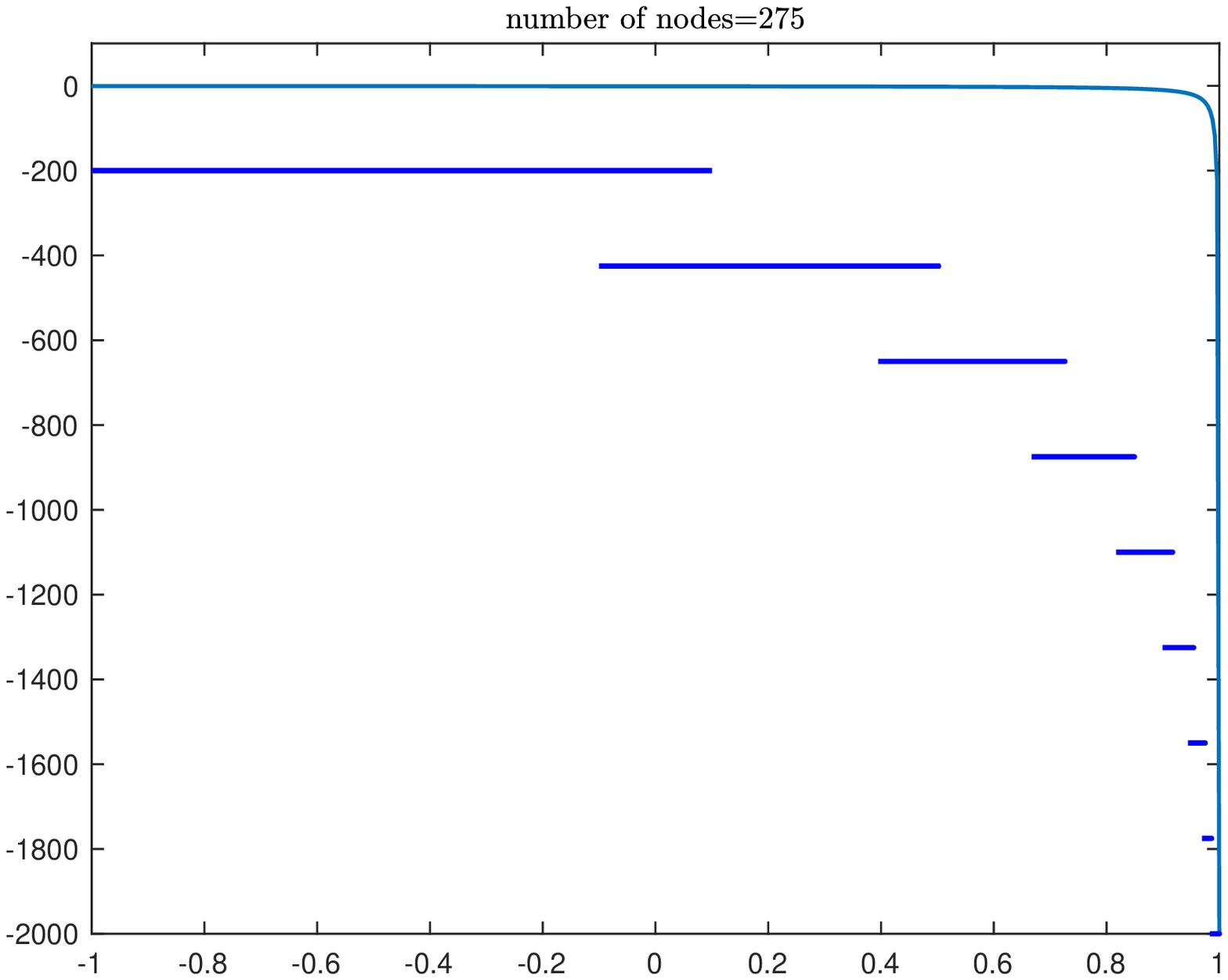}
   \label{MERGEB}
 }
\subfloat[Tree after merging.]{
\includegraphics[scale = 0.35]{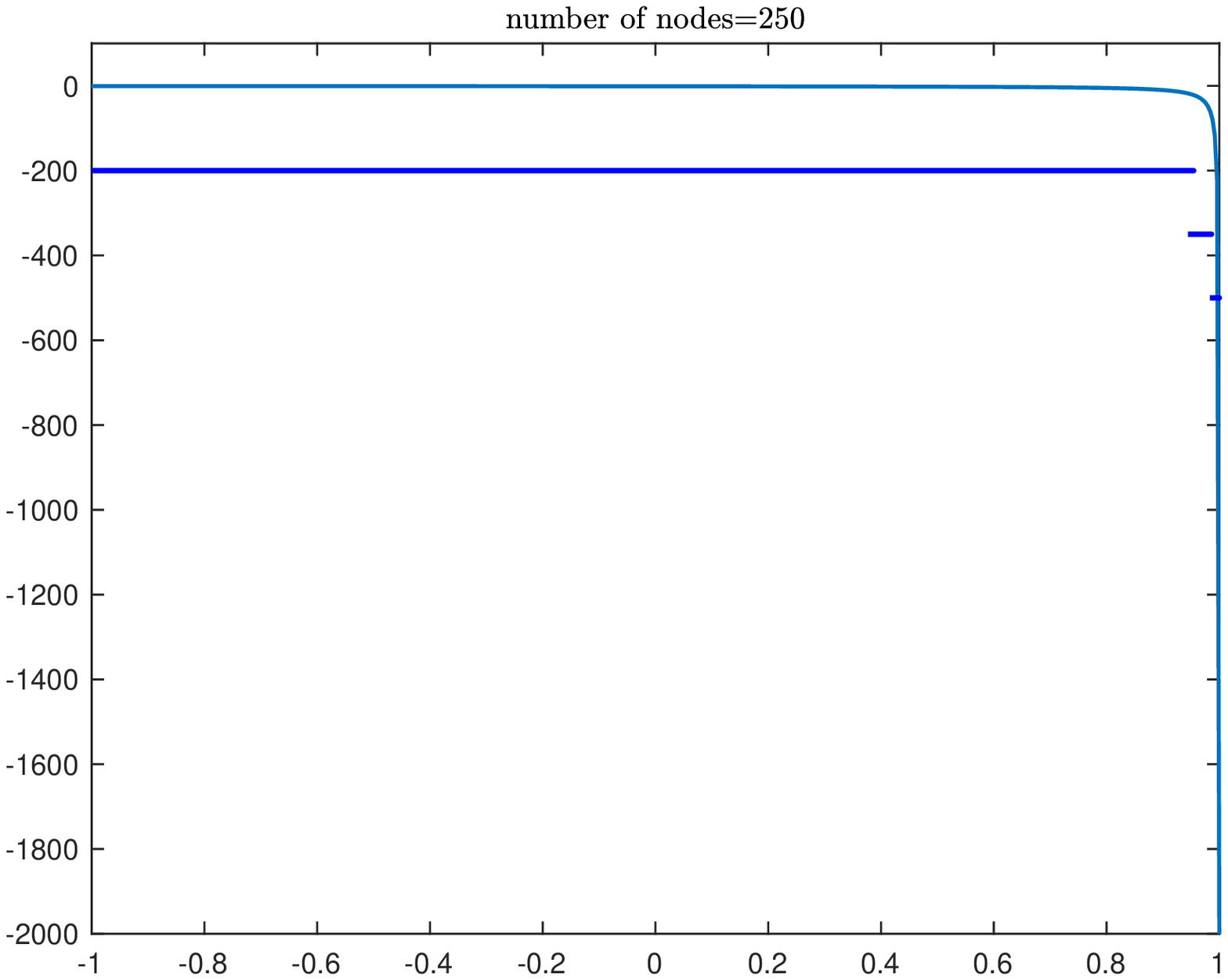}
   \label{MERGEA}
 }
\caption{An example of how the PUM with $t=0.08$, $\nmax=128$ resolves $f(x)=\frac{1}{x-1.0005}$ without merging (a) and after merging (b).
}
\label{MERGE_EXAMPLE}
\end{figure}

\subsection{Differentiation matrices}
\label{PUM_matrix}

Next we demonstrate how to construct a first derivative matrix; higher derivative matrices can be similarly constructed.  Suppose we have constructed a splitting represented with the tree $T$. For each node $\nu$ of the tree, we add the following methods:
\begin{itemize}
\item points($\nu$):= provides the Chebyshev points of the leaves of $\nu$
\item leafpoints($\nu$):= provides the Chebyshev points of $T$ in interval($\nu$) i.e. \newline $\text{points(root($T$))} \cap \text{interval($\nu$)}$
\item pointindex($\nu$):=gives the index of points($\nu$) with respect to the points of the parent of $\nu$ (if $\nu$ is a child)
\item leafpointindex($\nu$):=gives the index of leafpoints($\nu$) with respect to the leafpoints of the parent of $\nu$ (if $\nu$ is a child).
\end{itemize}



Let $[\alpha,\beta]=\text{interval($\nu$)}$. We want to construct matrices $M,D$ such that 
\begin{equation}
\begin{aligned}
M \left . f(x) \right |_{\text{points($\nu$)}} &= \left . s_{[\alpha,\beta]}(x) \right |_{\text{leafpoints($\nu$)}}, \\
D \left . f(x) \right |_{\text{points($\nu$)}} &= \left . \frac{d}{dx} s_{[\alpha,\beta]}(x) \right |_{\text{leafpoints($\nu$)}}.
\end{aligned}
\end{equation}

Let $I_k = \text{interval(\child{k}($\nu$))}$, $w_k(x)=\text{\weight{k}($\nu$)}$ for $k=0,1$. Then
\begin{equation}
\begin{aligned}
\left . s_{[\alpha,\beta]}(x) \right |_{\text{leafpoints($\nu$)}} &= \sum_{k=0}^1 \left . w_k(x) s_{I_k}(x) \right |_{\text{leafpoints($\nu$)}}, \\
\left . \frac{d}{dx} s_{[\alpha,\beta]}(x) \right |_{\text{leafpoints($\nu$)}} &= \sum_{k=0}^1 \lp \left . w_k(x)  \frac{d}{dx} s_{I_k}(x) + \frac{d}{dx} w_k(x)  s_{I_k}(x) \rp \right |_{\text{leafpoints($\nu$)}} .
\end{aligned}
\label{sum_eval}
\end{equation}
Thus we can recursively build up the differentiation matrix through the tree $T$. Due to the support of the weights, for each term in (\ref{sum_eval}) we only need evaluate the approximation $s_{I_k}(x)$ (or its derivative) for $\text{leafpoints($\nu$)} \cap I_k$, i.e. leafpoints(\child{k}($\nu$)). We describe how to construct the differentiation recursively in Algorithm~\ref{alg4}, using MATLAB notation for matrices. At each leaf the interpolation matrix $M$ has entries given by the barycentric interpolation formula based on second-kind Chebyshev points, as produced by the Chebfun command {\tt barymat} \cite{driscoll2015rectangular}.

\begin{algorithm}
\caption{[$M,D$]=diffmatrix($\nu$)}
\label{alg4}
\begin{algorithmic}
\IF{$\nu$ is a leaf}
\STATE $M$:= the Chebyshev barycentric matrix from points($\nu$) to leafpoints($\nu$)
\STATE $D_x$:= Chebyshev differentiation matrix with grid points($\nu$).
\STATE $D$:=$M D_x$.
\ELSE
\STATE $M,D$:=zeros(length(leafpoints($\nu$)),length(points($\nu$)))
\FOR{$k=0,1$}
\STATE [$M_k$,$D_k$]:= diffmatrix(\child{k}($\nu$))
\STATE $M$(leafpointindex(\child{k}($\nu$)),pointindex(\child{k}($\nu$))) = \WRP \text{diag} $\lp \left . w_k \right |_{\text{leafpoints(\child{k}($\nu$))}} \rp$*$M_k$;
\STATE $D$(leafpointindex(\child{k}($\nu$)),pointindex(\child{k}($\nu$))) = \WRP \text{diag} $\lp \left . w_k \right |_{\text{leafpoints(\child{k}($\nu$))}}  \rp$*$D_k$+ \WRP \text{diag} $\lp \frac{d}{dx} \left . w_k \right |_{\text{leafpoints(\child{k}($\nu$))}} \rp$*$M_k$;
\ENDFOR
\ENDIF
\end{algorithmic}
\end{algorithm}

For $x \in [\alpha,\beta]$ we only need to evaluate the local approximations for the patches $x$ belongs to; this implies that the differentiation matrices will be inherently sparse. For example, Figure~\ref{SPARSE_DX} shows the sparsity of the first derivative matrix for the tree generated in Figure~\ref{ARCTAN2}. In this case, we have a sparsity ratio of around 76\%.

\begin{figure}[!htb]
\centering
\includegraphics[scale = 0.5]{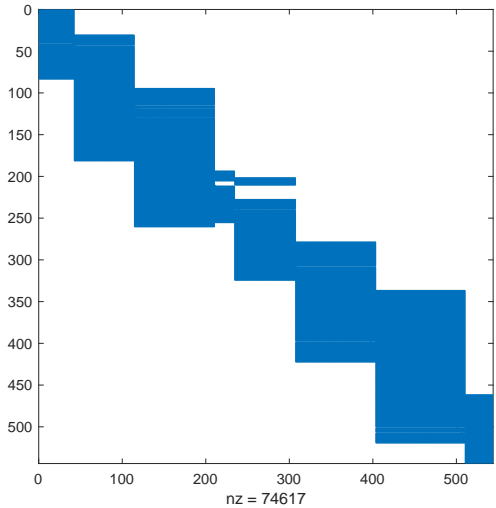}
\caption{Sparsity of the first derivative matrix for the tree generated for Figure~\ref{ARCTAN2}.}
\label{SPARSE_DX}
\end{figure}

\section{PUM for boundary-value problems}
\label{PUM_BVP_SEC}
Our method can be applied to solve linear and nonlinear boundary-value problems. For instance, consider a simple Poisson problem with zero boundary conditions:
\begin{equation}
\begin{aligned}
&u''(x) = f(x) \text{ for }-1<x<1 \\
&u(-1)=0, u(1)=0.
\end{aligned}
\label{simp_pois}
\end{equation}
Suppose that we have differentiation and interpolation matrices $D_{xx}$ and $M$ from section~\ref{PUM_matrix}, $X$ is the set of Chebyshev points over all the leaves, and that $X_I$, $X_B$ are the respective interior and boundary points of $X$. Let $E_{I}$ and $E_{B}$ be the matrices that map a vector to its subvector for the interior and boundary indices respectively. Let $F$ be the vector of values used for the local interpolants (i.e. if we had only two leaves whose interpolants used values $F_1,F_2$, we set $F = [F_1^T  F_2^T]^T$). In order to find a PUM approximation $s(x)$ that approximates (\ref{simp_pois}) we find $F$ by solving the following linear system:
\begin{equation}
\begin{bmatrix}
E_{I} D_{xx} \\[1mm] 
 E_{B} M
\end{bmatrix}
\begin{bmatrix}
E_{I} F \\[1mm]
E_{B} F
\end{bmatrix}
=
\begin{bmatrix}
\left . f \right |_{X_I} \\[1mm]
0 \\ 
\end{bmatrix}.
\label{PUM_lin_system}
\end{equation}

Algorithm~\ref{BVP_solve} builds an adaptive solution for the BVP. We first construct a PU approximation $s(x)$ by solving the discretized system in (\ref{PUM_lin_system}). Sampling with $s(x)$, we use Algorithm~\ref{alg6} to determine if the solution is refined and split leaves that are determined to be unrefined. Here we also allow merging for a node with resolved left and right leaves (i.e., the left and right leaves can be merged back together).

\begin{algorithm}
\caption{$T$=refineBVP($\nmax$,$t$,BVP)}
\label{alg7}
\begin{algorithmic}
\STATE Define $T$ as a tree with a single node with the domain of the BVP.
\WHILE{$T$ has unrefined leaves}
\STATE Find values for the interpolants $F$ of the leaves of $T$ by solving a discretized \ARP system defined by the interpolation and differentiation matrices of $T$.
\STATE sample($T$,$F$) 
\STATE $s(x) = \text{eval}(\text{root}(T),x)$ (the PU approximation)
\STATE sample($T$,$s(x)$)
\STATE splitleaves(root($T$),$\nmax$,$t$)
\ENDWHILE
\end{algorithmic}
\label{BVP_solve}
\end{algorithm}

\subsection{BVP examples}
\label{BVP_SEC}
%
%
%

We solve the stationary Burgers equation on the interval $[0,1]$ with Robin boundary conditions \cite{reyna1995exponentially}:
\begin{equation}
\begin{aligned}
&\nu u''(x)-u(x)u'(x)=0 \\
&\nu u'(0)-\kappa(u(0)-\alpha)=0 \\
&\nu u'(1)+\kappa(u(1)+\alpha)=0
\end{aligned}
\label{PUM_nlin_system}
\end{equation}
which has nontrivial solution
\begin{equation}
u(x)=-\beta \tanh \lp \frac{1}{2} \beta \nu^{-1} \lp x-\frac{1}{2} \rp \rp
\label{true_sol}
\end{equation}
where $\beta$ satisfies
\begin{equation}
-\frac{1}{2} \beta^2 \text{sech}^2 \lp \frac{1}{4} \beta \nu^{-1} \rp+\kappa \left [ \alpha-\beta \text{tanh} \lp \frac{1}{4} \beta \nu^{-1} \rp \right ]=0.
\end{equation}

We choose $\nu=5 \times 10^{-3},\alpha=1$, and $\kappa=2$. We use {\tt fsolve} in MATLAB to solve the BVP, supplying the Jacobian of the discretized nonlinear system. Starting with a linear guess $u(x)=0$, we update the solution from the latest solve (i.e. if the solution $s(x)$ from Algorithm~\ref{BVP_solve} is determined to be unresolved, we use it as the next initial guess). For this problem we set the Chebfun chopping tolerance to $10^{-10}$. Our solution was resolved to the tolerance we set after four nonlinear solves; as seen in Figure~\ref{NLIN_EXAMPLE}, the final approximation had 298 nodes and the absolute error was less than $10^{-4}$ as seen in Figure~\ref{NLIN_EXAMPLE}. On a machine with processor 2.6 GHz Intel Core i5, the solution was found in 1.3 seconds.

\begin{figure}[!htb]
\centering
\subfloat[Solution with subintervals plotted.]{
\includegraphics[scale = 0.35]{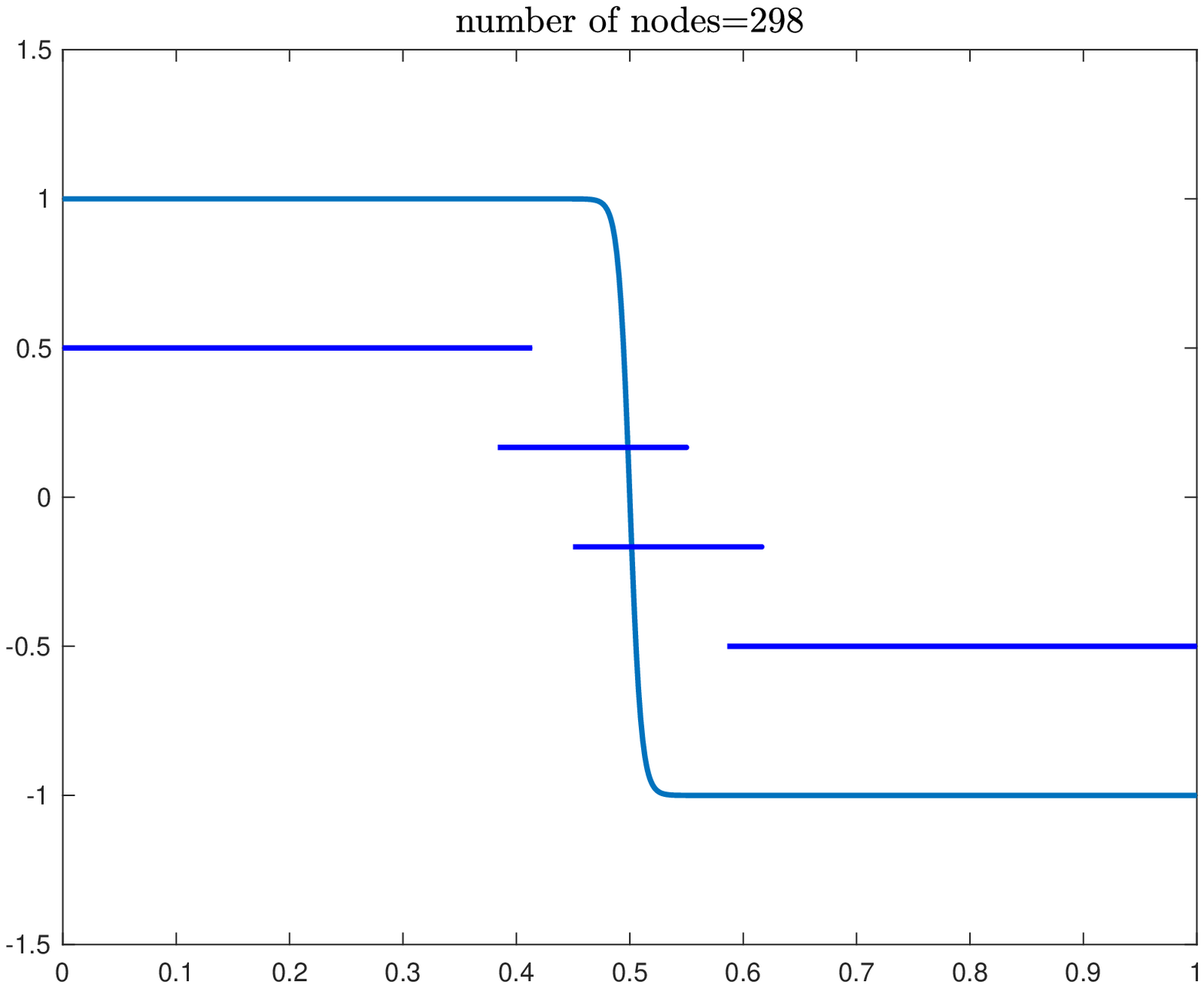}
   \label{NLIN_EXAMPLEA}
 }
\subfloat[Plot of the error.]{
\includegraphics[scale = 0.35]{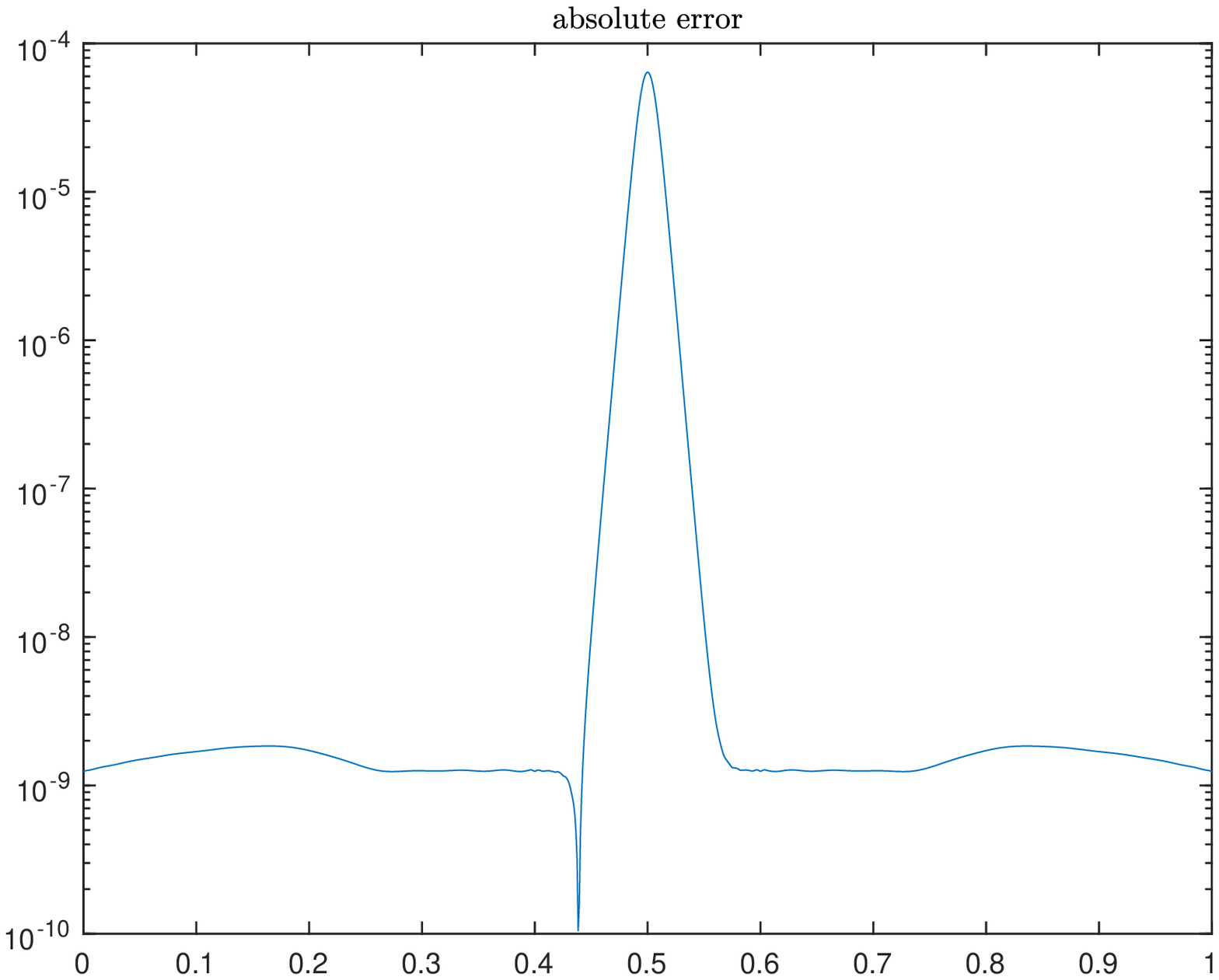}
   \label{NLIN_EXAMPLEB}
 }
\caption{Numerical solution using the PU method and residual for the BVP (\ref{PUM_nlin_system}) with $\nu=5 \times 10^{-3}$, $t=0.1$, and $\nmax = 128$.}
\label{NLIN_EXAMPLE}
\end{figure}

We preformed a similar experiment but instead used global Chebyshev interpolants. We adapt by increasing the degree of the polynomial from $n$ to $\text{floor}(1.5 n)$, starting with $n=128$. We stop when we have a solution that is refined to the tolerance $10^{-10}$ (same as before). Both the solution and residual are in Figure~\ref{GNLIN_EXAMPLE}; here we have the absolute error is higher at 1.8e-2. The solution took 3.2 minutes on the same machine. There are two main reasons why the global solution performs much slower. First, in order to resolve the true solution with the tolerance $10^{-10}$, the global Chebyshev solution requires 766 nodes versus 300 for the PU approximation. Secondly, when adapting with the PUM, if a leaf is determined to be refined, the number of nodes is reduced as dictated in Algorithm~\ref{alg2} and the leaf is not split in further iterations. This keeps the total number of nodes lower while adapting.

\begin{figure}[!htb]
\centering
\subfloat[Solution with subintervals plotted.]{
\includegraphics[scale = 0.35]{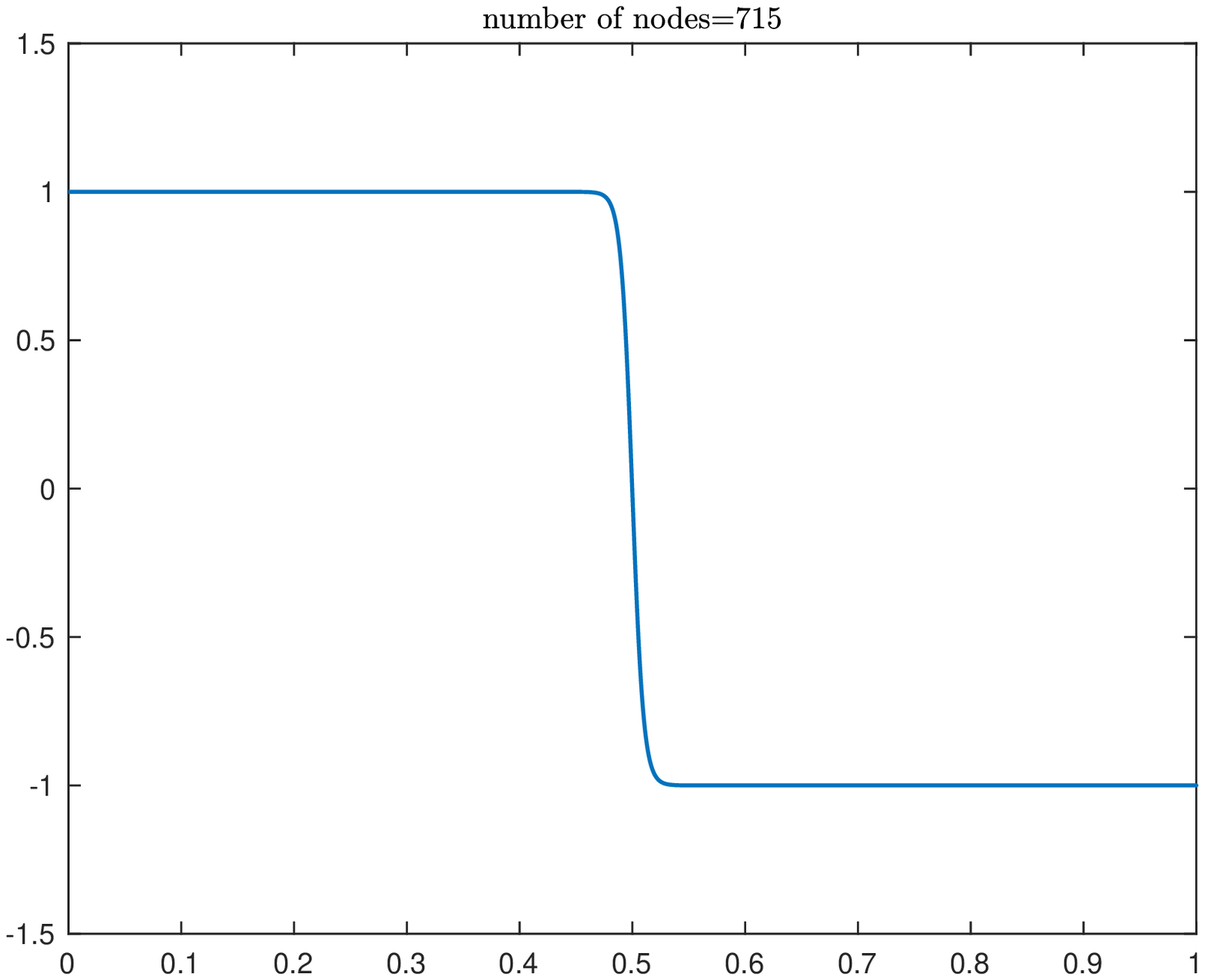}
   \label{GNLIN_EXAMPLEA}
 }
\subfloat[Plot of the error.]{
\includegraphics[scale = 0.35]{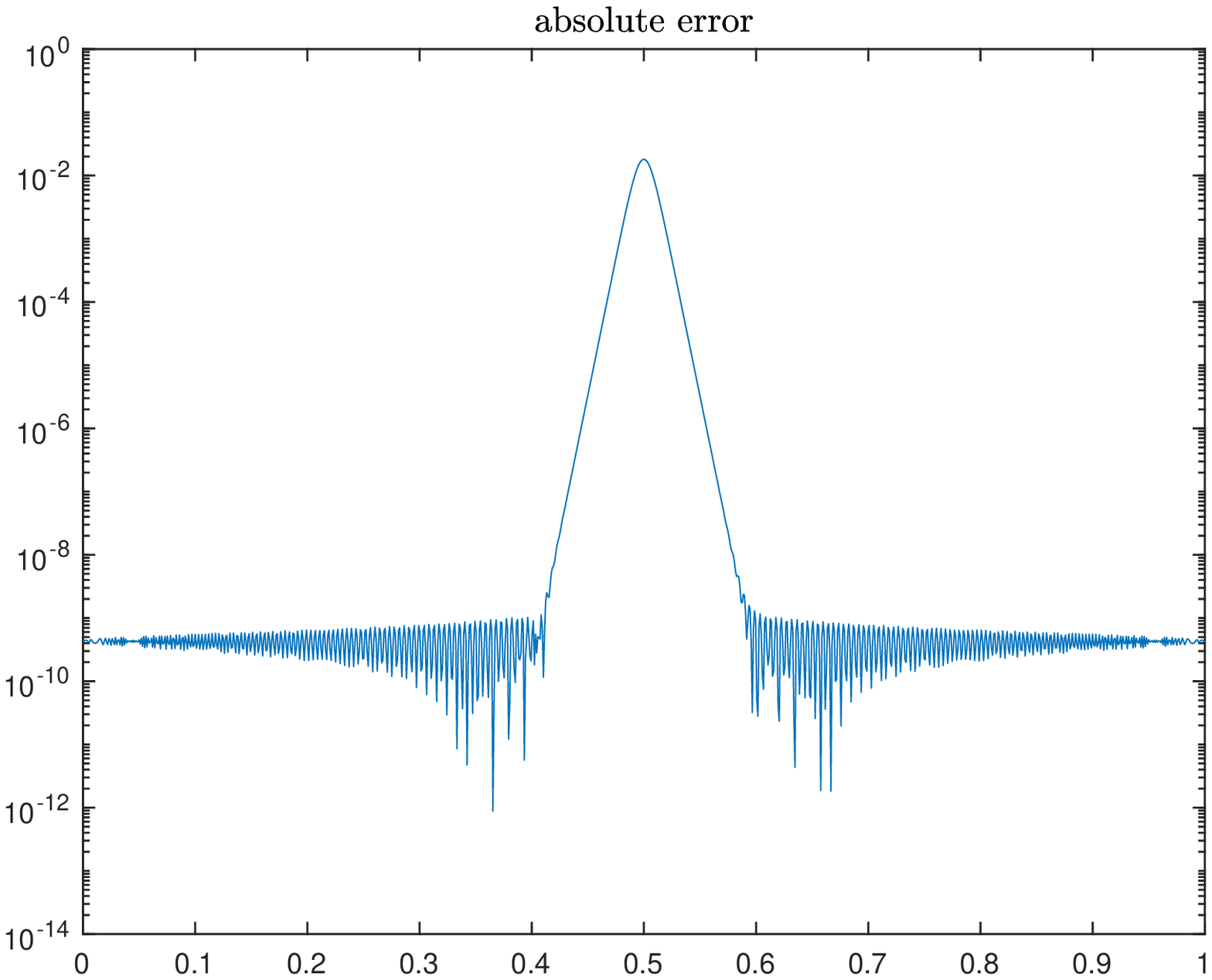}
   \label{GNLIN_EXAMPLEB}
 }
\caption{Numerical solution using the global Chebyshev method and residual for the BVP (\ref{PUM_nlin_system}) with $\nu=5 \times 10^{-3}$.
}
\label{GNLIN_EXAMPLE}
\end{figure}

\section{Discussion}
Our method offers a simple way to adaptively construct infinitely smooth approximations of functions that are given explicitly or that solve BVPs. By recursively constructing the PU weights with the binary tree, we avoid the need to determine the neighbors of each patch (as would be needed with the standard Shepard's PU weights). While this is not a serious issue in one dimension, the complexity of how the patches overlap increases with higher dimension. For example, in 2D we could build a similar method on a box where we use tensor product Chebyshev approximations. We would refine by splitting the box into two overlapping parts (either in $x$ or $y$) and recursively build a binary tree. We similarly would define partition of unities for each of the splits. If we used infinitely smooth weights at the splits, the 2D PU approximation will be infinitely smooth as well.

Our method leaves room for improvement. For instance, while merging helps reduce the number of nodes, in cases where we have a singularity right above the split the PU method over-resolves in the overlap; this can be seen in Figure~\ref{ARCTAN2}. The source of the problem is that patches may be adjacent in space but not in the tree. This could be resolved by a more robust merging algorithm. Alternatively we could determine an optimal splitting location through a Chebyshev-Pad\'{e} approximation as in \cite{driscoll2014optimal}, but the PU adds a layer of complexity since we must optimize not just for the splitting location but the size of the overlap.

Additionally it is possible to construct weights that are not $C^{\infty}$ but have smaller norms in their derivatives. For instance,
\begin{equation}
\begin{aligned}
w_{\ell}(x) &= \begin{cases}
1 & x \leq -t \\
\frac{1}{4t^3} x^3 - \frac{3}{4 t} x+\frac{1}{2} & -t\leq x \leq t \\
0 & x>t
 \end{cases} \\
 w_{r}(x) &= 1-w_{\ell}(x)
\end{aligned}
\end{equation}
defines a $C^1[-1,1]$ piecewise cubic partition of unity, where $\| w_{\ell}'(x)\|_{\infty} = \frac{3}{4t}$. If a BVP requires higher smoothness, we could similarly construct a higher degree polynomial for the weights.

\bibliographystyle{siamplain}
\bibliography{PUPaper.bib}
\end{document}